\theoremstyle{plain}
\newtheorem*{thm A}{Theorem~A}
\newtheorem*{thm B}{Theorem~B}
\newtheorem*{thm C}{Theorem~C}
\newtheorem*{thm D}{Theorem~D}
\newtheorem*{thm E}{Theorem~E}
\newtheorem*{thm F}{Theorem~F}
\newtheorem*{Main Theorem}{Main Theorem}
\newtheorem*{pro A}{Proposition~A}
\newtheorem*{pro B}{Proposition~B}
\newtheorem*{lem A}{Lemma~A}
\newtheorem*{coro}{Corollary}
\newtheorem{theorem}{Theorem}[section]
\newtheorem{lemma}[theorem]{Lemma}
\theoremstyle{definition}
\def \EN{{\eta}_{\nu}}
\def \KN{{\xi}_{\nu}}
\def \PN{{\phi}_{\nu}}
\def \PNK{{\phi}_{\nu}{\xi}}
\def \PNP{{\phi}_{\nu}{\phi}}
\def \E{\eta}
\def \Ph{\phi}
\def \al{\alpha}
\def \N{\nabla}
\def \SN{{\sum}_{{\nu}=1}^3}
\def \GBo{G_2({\Bbb C}^{m+1})}
\def \GBt{G_2({\mathbb C}^{m+2})}
\def \gtw{\hat \nabla ^{(k)}}
\def \gtwl{\hat \L ^{(k)}}
\def \L{\mathcal{L}}
\def \x{\xi}
\def \Ko{{\xi}_1}
\def \Kt{{\xi}_2}
\def \Ks{{\xi}_3}
\def \Dc{\mathfrak D^{\bot}}
\def \D{\mathfrak D}
\def \PKo{{\phi}{\xi}_1}
\def \Eo{{\eta}_1}
\def \Et{{\eta}_2}
\def \Es{{\eta}_3}
\def \Po{{\phi}_1}
\def \PoK{{\phi}_1{\xi}}
\begin{document}

\title[GTW Reeb Lie derivative structure Jacobi operator]
{Real hypersurfaces in complex two-plane Grassmannians with
GTW Reeb Lie derivative structure Jacobi operator}
\vspace{0.2in}
\author[E. Pak, G.J. Kim \& Y.J. Suh]{Eunmi Pak, Gyu Jong Kim and Young Jin Suh}
\address{\newline
Eunmi Pak, Gyu Jong Kim and Young Jin Suh
\newline Department of Mathematics,
\newline Kyungpook National University,
\newline Daegu 702-701, Korea}
\email{empak@hanmail.net}
\email{hb2107@naver.com}
\email{yjsuh@knu.ac.kr}

\footnotetext[1]{{\it 2010 Mathematics Subject Classification} : Primary 53C40; Secondary 53C15.}

\footnotetext[2]{{\it Key words and phrases} : Real hypersurface, Complex two-plane Grassmannian, Hopf hypersurface, 
Generalized Tanaka-Webster connection, Structure Jacobi Operator, Generalized Tanaka-Webster Lie derivative.}

\thanks{* This work was supported by grant Proj. No. NRP-2012-R1A2A2A-01043023.}

\begin{abstract}
  Using generalized Tanaka-Webster connection, we considered a real hypersurface $M$ in a complex two-plane Grassmannian $G_2({\mathbb C}^{m+2})$ when the GTW Reeb Lie derivative of the structure Jacobi operator coincides with the Reeb Lie derivative. Next using the method of simultaneous diagonalization, we prove a complete classification for a real hypersurface in $G_2({\mathbb C}^{m+2})$ satisfying such a condition. In this case, we have proved that $M$ is an open part of a tube around a totally geodesic $G_2({\mathbb C}^{m+1})$ in $G_2({\mathbb C}^{m+2})$.
\end{abstract}

\maketitle

\section*{Introduction}
\setcounter{equation}{0}
\renewcommand{\theequation}{\arabic{equation}}
\vspace{0.13in}

For real hypersurfaces with parallel curvature tensor,  many
differential geometers studied in complex projective spaces or in
quaternionic projective spaces~(\cite{KPSS,PSS,PS}). Different point of view, it is attractive to
classify real hypersurfaces in complex two-plane Grassmannians with certain conditions. For example, there is some result about parallel structure Jacobi operator (For more detail, see ~\cite{JPS,JMPS}). It is natural to question about complex two-plane Grassmannians.
\par
\vskip 3pt
As an ambient space,  a complex two-plane Grassmannian  $G_{2}(\mathbb C ^{m+2})$  consists of all
complex two-dimensional linear subspaces in $\mathbb C^{m+2}$. This
Riemannian symmetric space is the unique compact irreducible
Riemannian manifold being equipped with both a K\"{a}hler structure
$J$ and a quaternionic K\"{a}hler structure $\mathfrak{J}$ not
containing $J$. Then, we could naturally consider two geometric conditions for hypersurfaces $M$ in $G_{2}(\mathbb C ^{m+2})$, namely, that a $1$-dimensional
distribution $[\xi]= \text{Span} \{\xi \}$ and a $3$-dimensional
distribution $\mathfrak {D}^{\bot} = \text{Span}\{\xi_{1},\xi_{2},
\xi_{3}\}$ are both invariant under the shape operator~$A$ of $M$~(\cite{BS1}), where the {\it Reeb} vector field  $\xi$ is defined by $\xi = -JN$, $N$ denotes a local unit normal vector field of $M$ in
$G_{2}(\mathbb C ^{m+2})$ and  the {\it almost contact 3-structure}
vector fields $\{\xi_{1},\xi_{2},\xi_{3}\}$ are defined by $\xi_{\nu} = - J_{\nu} N$ $(\nu=1, 2, 3$).

\par
\vskip 3pt

By using  the result in Alekseevskii~\cite{Ale}, Berndt and Suh \cite{BS1} proved the following result about space of $type ~(A)$(sentence about (A)) and $type~ (B)$(one about (B))\,:

\begin{thm A}
Let $M$ be a connected orientable real hypersurface in $G_{2}(\mathbb C
^{m+2})$, $m \geq 3$. Then both $[\xi]$ and $\mathfrak D ^{\bot}$
are invariant under the shape operator of $M$ if and only if
\begin{enumerate}[\rm(A)]
\item {$M$ is an open part of a tube around a totally geodesic
$G_2({\mathbb C}^{m+1})$ in $G_2({\mathbb C}^{m+2})$, or}
\item{$m$ is even, say $m = 2n$, and $M$ is an open part of a
tube around a totally geodesic ${\mathbb H}P^n$ in $G_2({\mathbb
C}^{m+2})$.}
\end{enumerate}
\end{thm A}

\par

\vskip 3pt

When we consider the Reeb vector field $\xi$ in the expression of
the curvature tensor $R$ for a real hypersurface $M$ in $G_2({\Bbb
C}^{m+2})$, the structure Jacobi operator $R_{\xi}$ can be defined
in such as
\begin{equation*}
R_{\xi}(X)=R(X,{\xi}){\xi},
\end{equation*}
for any tangent vector field $X$ on $M$.
\par
\vskip 6pt Using the structure Jacobi operator $R_{\xi}$, Jeong,
P\'erez and Suh~\cite{JPS} considered a notion of {\it parallel structure
Jacobi operator}, that is, $\N_X R_\xi =0$ for any vector field $X$
on $M$, and gave a non-existence theorem.
And the authors~\cite{JMPS} considered the general
notion of {\it ${\frak D}^{\bot}$-parallel structure Jacobi
operator} defined in such a way that $ {\nabla}_{{\xi}_i}R_{\xi}=0,~
i=1,2,3, $ which is weaker than the notion of parallel structure
Jacobi operator. They also gave a non-existence theorem.
\vskip 6pt
\par
By the way, the Reeb vector field $\xi$ is said to be {\it
Hopf} if it is invariant under the shape operator $A$. The one
dimensional foliation of $M$ by the integral manifolds of the Reeb
vector field $\xi$ is said to be the {\it Hopf foliation} of $M$. We
say that $M$ is a {\it Hopf hypersurface} in $\GBt$ if and only if the
Hopf foliation of $M$ is totally geodesic. By the formulas in
Section~\ref{section 1} it can be easily checked that $M$ is Hopf if
and only if the Reeb vector field $\xi$ is Hopf.

\par
\vskip 6pt

Now, instead of the Levi-Civita connection for real hypersurfaces in
K\"{a}hler manifolds, we consider another new connection named {\it
generalized Tanaka-Webster connection}~(in short, let us say the
{\it GTW connection}) ${\hat\nabla}^{(k)}$ for a non-zero
real number $k$ (\cite{Kon}). This new connection $\gtw$ can be
regarded as a natural extension of Tanno's generalized
Tanaka-Webster connection~$\hat \nabla$ for contact metric
manifolds. Actually, Tanno~\cite{Tanno} introduced the generalized Tanaka-Webster connection ~$\hat \nabla$ for contact
Riemannian manifolds  by using the canonical connection on a nondegenerate, integrable
CR manifold.
\par
\vskip 6pt

 On the other hand, the original {\it Tanaka-Webster
connection}~(\cite{Tanaka,Web}) is given as a unique affine
connection on a non-degenerate, pseudo-Hermitian $CR$ manifolds
 associated with the almost contact structure. In particular,
if a real hypersurface in a K\"{a}hler manifold satisfies $\phi A + A
\phi = 2k \phi$ ($k \neq 0$), then the g-Tanaka-Webster connection
$\gtw$ coincides with the Tanaka-Webster connection.
\par
\vskip 6pt
Related to GTW connection, due to Jeong, Pak and Suh (\cite{JPS1,JPS2}), the {\it GTW Lie derivative}  was defined by
\begin{equation}
\gtwl_{X}Y=\gtw_{X}Y-\gtw_{Y}X,
\end{equation}
where $\gtw_{X}Y=\N_{X}Y + g(\phi AX,Y)\xi -\eta(Y)\phi AX-k\eta(X)\phi Y,~ k\in {\mathbb R}\setminus\{0\}$.\\
\vskip 6pt
In this paper, using the GTW Lie derivative, we consider a condition that the {\it GTW Reeb Lie derivative of the structure Jacobi operator coincides with the Reeb Lie derivative}, that is,
\begin{equation}
(\gtwl_{\xi}R_{\xi})Y=(\L_{\xi}R_{\xi})Y,
\end{equation}
for any tangent vector field $Y$ in $M$.
Using above notion, we have a classification theorem as follows\,:
\begin{Main Theorem}
Let $M$ be a connected orientable Hopf hypersurface in a complex two-plane Grassmannian $\GBt$, $m\geq 3$.  If the GTW Reeb Lie derivative of the structure Jacobi operator coincides with the Reeb Lie derivative and the Reeb curvature is non-vanishing constant along the Reeb vector field, then $M$ is an open part of a tube around a totally geodesic $G_2({\mathbb C}^{m+1})$ in $G_2({\mathbb C}^{m+2})$.
\end{Main Theorem}
\vskip 6pt
As a corollary, we consider a condition stronger than the condition (2) as follows\,:
 $$(\gtwl_{X}R_{\xi})Y=(\L_{X}R_{\xi})Y$$ for any tangent vector fields $X,Y$ in $M$. Then we assert the following
\begin{coro}
There do not exist any connected orientable Hopf real
hypersurfaces in $\GBt$, $m{\ge 3}$, with $(\gtwl_{X}R_{\xi})Y=(\L_{X}R_{\xi})Y$ when the Reeb
curvature is constant along the direction of the Reeb vector field.
\end{coro}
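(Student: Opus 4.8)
The plan is to derive the Corollary from the Main Theorem by showing that the stronger hypothesis $(\gtwl_{X}R_{\xi})Y=(\L_{X}R_{\xi})Y$ for \emph{all} $X$, together with the Main Theorem's conclusion, leads to a contradiction. First I would observe that the hypothesis of the Corollary, taken in the special case $X=\xi$, reduces to condition~(2) of the Main Theorem, since $\gtwl_{\xi}R_\xi - \L_\xi R_\xi$ is exactly the quantity appearing there. Hence any Hopf hypersurface satisfying the Corollary's hypothesis also satisfies the Main Theorem's hypothesis, and so $M$ must be an open part of a tube of some radius $r\in(0,\pi/(2\sqrt8))$ around a totally geodesic $G_2(\mathbb C^{m+1})$ in $G_2(\mathbb C^{m+2})$, i.e.\ a hypersurface of type~(A).

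Next I would write out $(\gtwl_{X}R_{\xi})Y - (\L_{X}R_{\xi})Y$ explicitly using the definition $\gtw_{X}Y=\N_{X}Y + g(\phi AX,Y)\xi -\eta(Y)\phi AX-k\eta(X)\phi Y$. Since the ordinary Lie derivative $\L_X R_\xi = \N_X R_\xi - R_\xi\N_X(\cdot) + \cdots$ differs from the GTW analogue only in the extra torsion-type terms $g(\phi AX,\,\cdot\,)\xi - \eta(\cdot)\phi AX - k\eta(X)\phi(\cdot)$, the difference $(\gtwl_{X}R_{\xi})Y - (\L_{X}R_{\xi})Y$ is a purely \emph{algebraic} (zeroth-order, tensorial) expression in $X,Y$ built from $A$, $\phi$, $\eta$, $\xi$, $R_\xi$ and $k$. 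Setting this tensor equal to zero gives a pointwise identity. I would then substitute the known eigenvalue structure of a type~(A) hypersurface: on such $M$ the shape operator $A$ has eigenvalues $\alpha,\beta,\lambda,\mu$ with prescribed eigenspaces (the Reeb direction $\xi$, the distribution $\Dc\ominus[\xi]$, and two pieces of $\D$), and the structure Jacobi operator $R_\xi$ is likewise explicitly diagonalizable on these eigenspaces using the Gauss equation and the curvature formula of $G_2(\mathbb C^{m+2})$.

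The main obstacle, and the heart of the argument, will be plugging suitable test vectors into the vanishing algebraic identity and extracting a numerical contradiction. I would take $X$ in $\Dc\ominus[\xi]$ (the $\xi_\nu$-directions orthogonal to $\xi$) and $Y$ in one of the $\D$-eigenspaces, where the terms $\phi AX$, $g(\phi AX,Y)$ and $\eta(\cdot)$ interact nontrivially; the identity then collapses to a relation among $\alpha,\beta,\lambda,\mu,k$ and the explicit eigenvalues of $R_\xi$. Using the known trigonometric expressions for $\alpha,\beta,\lambda,\mu$ in terms of the tube radius $r$ (from Theorem~A and the structure theory of type~(A) hypersurfaces), this relation forces an impossible constraint on $r$ — for instance an equation with no solution in $(0,\pi/(2\sqrt8))$, or one forcing $k=0$ contrary to $k\in\mathbb R\setminus\{0\}$. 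This contradiction shows no such $M$ exists, proving the Corollary. The bookkeeping in the substitution step is the only delicate part; everything else is a direct application of the Main Theorem and standard formulas from Section~\ref{section 1}.
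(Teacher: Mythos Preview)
Your reduction in the first step has a genuine gap. The Main Theorem assumes the Reeb curvature $\alpha$ is a \emph{non-vanishing} constant along $\xi$, whereas the Corollary only assumes $\alpha$ is constant along $\xi$; in particular $\alpha=0$ is allowed. So specializing the Corollary's hypothesis to $X=\xi$ does \emph{not} put you inside the hypotheses of the Main Theorem, and you cannot conclude immediately that $M$ is of type~(A). The paper does not bypass this: it proves an additional lemma (Lemma~\ref{lemma 4.1} in Section~\ref{section 3}) showing that when $\alpha=0$ and $\xi\in\Dc$, the stronger hypothesis $(\gtwl_{X}R_{\xi})Y=(\L_{X}R_{\xi})Y$ still forces $A\phi=\phi A$. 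Only after that (together with Lemma~\ref{lemma 3.2} for $\alpha\neq0$) can one invoke the type~(A) classification. Your outline needs an argument filling this $\alpha=0$ case.

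For the contradiction step, your plan is in the right spirit but the paper's choice of test vectors is more efficient than yours: rather than taking $X\in\Dc\ominus[\xi]$ and $Y$ in a $\D$-eigenspace, the paper sets $Y=\xi$ in the identity $F_{X}R_{\xi}Y-F_{R_{\xi}Y}X-R_{\xi}F_{X}Y+R_{\xi}F_{Y}X=0$, obtaining $R_{\xi}\phi AX-kR_{\xi}\phi X=0$, and then takes $X=\xi_{2}\in T_{\beta}$. This yields $(\alpha\beta+2)(k-\beta)\xi_{3}=0$, and the explicit formulas $\alpha=\sqrt{8}\cot(\sqrt{8}r)$, $\beta=\sqrt{2}\cot(\sqrt{2}r)$ are then used to rule this out. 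Your proposed substitution ($Y$ in a $\D$-eigenspace) would require tracking the $T_{\lambda}$/$T_{\mu}$ eigenvalues of $R_{\xi}$ as well, which is more bookkeeping for no gain.
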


\vskip 6pt
 In section 1, we introduce basic equations in relation to the structure Jacobi operator and prove the key lemmas which will be useful to proceed our main theorem.  In section 2, we give a complete proof of the main theorem and corollary, respectively. In this paper, we refer to \cite{Ale,BS1,BS2,JPS,LS} for Riemannian geometric structures of $\GBt$ and its geometric quantities, respectively.
\vspace{0.15in}

\section{Key Lemmas}\label{section 1}
\setcounter{equation}{0}
\renewcommand{\theequation}{1.\arabic{equation}}
\vspace{0.13in}
\par
In this section, we introduce some fundamental equation of structure Jacobi operator and lemmas.
\begin{equation}\label{eq: 2.1}
\begin{split}
R_{\xi}X=& ~ R(X,\xi)\xi \\
=& ~X-\E(X)\x\\
&-\SN\Big\{\EN(X)\KN-\E(X)\EN(\x)\KN+3g(\PN X,\x)\PNK+\EN(\x)\PNP X \Big\}\\
&+\al AX-\al^{2}\E(X)\x,
\end{split}
\end{equation}
for any tangent field X on M.
\par
In \cite{JPS1}, they defined the  GTW Lie derivative as follows:
\begin{equation*}
\gtwl_{X}Y=\gtw_{X}Y-\gtw_{Y}X,
\end{equation*}
where $\gtw_{X}Y=\N_{X}Y + F_X Y, ~F_X Y=g(\phi AX,Y)\xi -\eta(Y)\phi AX-k\eta(X)\phi Y$. The operator $F_X Y$ said to be the {\it generalized Tanaka-Webster operator}~(in short, GTW operator).
Putting $X=\x$ and $Y=\x$, the GTW operator is written as
\begin{equation}\label{eq: op}
F_\x Y=-k\phi Y ~\text {and}~ F_X \x=-\phi AX, ~\text{respectively}.
\end{equation}
For an (1-1) type tensor $R_{\xi}$,
this condition $(\gtwl_{X}R_{\xi})Y=(\L_{X}R_{\xi})Y$ is equivalent to
\begin{equation}\label{eq: 2.2}
F_{X}(R_{\xi}Y)-F_{R_{\xi}Y}X-R_{\xi}F_{X}Y+R_{\xi}F_{Y}X=0.
\end{equation}

Replacing X=$\x$ in \eqref{eq: 2.2}, we get
\begin{equation}\label{eq: 2.3}
-k\Ph R_{\x}Y+ \Ph AR_{\x}Y+kR_{\x}\Ph Y-R_{\x}\Ph AY=0.
\end{equation}

Since $R_{\x}$ is a symmetric tensor field, taking symmetric part of \eqref{eq: 2.3}, we have
\begin{equation}\label{eq: 2.4}
kR_{\x}\Ph Y-R_{\x}A\Ph Y-k\Ph R_{\x}Y+A\Ph R_{\x}Y=0.
\end{equation}

Subtracting \eqref{eq: 2.4} from \eqref{eq: 2.3}, we obtain
\begin{equation}\label{eq: 2.7}
(\Ph A-A\Ph)R_{\xi}Y=R_{\xi}(\Ph A-A\Ph)Y.
\end{equation}
Therefore, this condition that the GTW Reeb Lie derivative of the structure Jacobi operator coincides with the Reeb Lie derivative has such a geometric condition, that is, $(\Ph A-A\Ph)$ and $R_{\xi}$ commute with each other.

Putting $Y=\x$ in \eqref{eq: 2.2} and using \eqref{eq: op}, \eqref{eq: 2.2} is replaced by
\begin{equation}\label{eq: 2.6}
R_{\x}(\Ph AX)-kR_{\x}(\Ph X)=0.
\end{equation}
Taking the transpose part on \eqref{eq: 2.6}
,
we get
\begin{equation}\label{eq: 2.7}
-A\Ph R_{\x}X+k\Ph R_{\x}X=0.
\end{equation}

By using above these equations, we can give two lemmas which contribute to prove our main theorem.
\vskip 5pt
\begin{lemma}\label{lemma 3.1}
Let M be a Hopf hypersurface $M$ in $\GBt$. If the GTW Reeb Lie derivative of the structure Jacobi operator coincides with the Reeb Lie derivative of this operator and the principal curvature $\alpha$ is constant along the direction of the Reeb vector field $\xi$, then the Reeb vector field $\x$ belongs to the distribution $\D$ or the distribution $\Dc$
\end{lemma}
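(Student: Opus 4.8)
The plan is to exploit the commutation relation \eqref{eq: 2.7}, namely $k\Ph R_{\x}X = A\Ph R_{\x}X$ for all $X$, together with the Hopf condition. Recall that for a Hopf hypersurface one has the standard formula $A\Ph A = \tfrac{\al}{2}(\Ph A + A\Ph) + \cdots$ (the Berndt–Suh relation), and that $\al$ being constant along $\xi$ forces the gradient of $\al$ to behave well; I will first use \eqref{eq: 2.7} applied to a $\Ph A$-eigenvector type argument, or rather substitute $X = \phi Y$ and combine with \eqref{eq: 2.6} to see that the image of $R_\x \phi$ is contained in the eigenspace of $A$ for the eigenvalue $k$ (on the orthogonal complement of suitable directions). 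The key algebraic point is that \eqref{eq: 2.7} says $A$ acts as the scalar $k$ on the range of $\Ph R_\x$.

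Next I would compute $R_\x \xi_\nu$ and $R_\x \phi\xi_\nu$ explicitly from \eqref{eq: 2.1}. Writing $\xi = \eta(\Ko)\Ko + \eta(\Kt)\Kt + \eta(\Ks)\Ks + (\text{component in }\D)$ is the wrong decomposition; rather one decomposes $\xi$ along $\D$ and $\Dc$. The goal is to show that if $\xi$ has a nonzero component in both $\D$ and $\Dc$, then \eqref{eq: 2.7}, evaluated on the $\Dc$-part and on $\phi$ of the $\D$-part, produces incompatible eigenvalue requirements for $A$: on one hand $A$ would have to equal $k$ on certain vectors, on the other hand the Hopf identities (together with $\al$ constant along $\xi$, which controls $\xi\al = 0$ and hence kills the derivative terms in the Codazzi equation restricted to $\xi$) force a different principal curvature there. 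Concretely, I expect to test \eqref{eq: 2.7} against $\xi_\nu$ itself and against $\phi\xi_\mu$, use the fact that $R_\x$ contains the terms $\al A\xi_\nu = \al(\ldots)$ and the $\EN(\x)$-weighted pieces, and extract the scalar relations among $\eta(\xi_\nu)$, $\al$, $k$.

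Then, setting $X = \xi$ in the original commutation \eqref{eq: 2.7} (the first one, $(\Ph A-A\Ph)R_\xi Y = R_\xi(\Ph A-A\Ph)Y$) and choosing $Y = \xi_\nu$, I would derive an equation of the form (polynomial in $\al$, $k$, $\eta(\xi_\nu)$) $\cdot\,\eta(\xi_\nu)\,(\text{something}) = 0$; a Hopf hypersurface where $\xi$ is neither in $\D$ nor in $\Dc$ has at least two of the $\eta(\xi_\nu)$ nonzero, and I would show the surviving factor cannot vanish. The main obstacle will be organizing the bookkeeping of the many $\SN$ terms in \eqref{eq: 2.1} when they are hit by $\Ph A - A\Ph$ and by $R_\xi$ again: one must carefully use $\phi\xi_\nu = \phi_\nu\xi$, the quaternionic relations $\phi_\nu\phi_{\nu+1} = \phi_{\nu+2} + \eta_{\nu+1}\otimes\xi_\nu$ (and cyclic), and the Hopf relations $A\xi_\nu = \al\,\eta_\nu(\xi)\,\xi + \cdots$ in just the right combination so that the $\D$-components and $\Dc$-components separate cleanly. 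Once the separation is done, the contradiction reduces to a short polynomial identity in $k$ and $\al$ that fails because $k \neq 0$; hence $\xi \in \D$ or $\xi \in \Dc$.
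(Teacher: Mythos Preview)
Your outline gestures in the right direction---compute $R_\xi$ on distinguished vectors and feed the result into the hypothesis---but it never becomes a proof, and two concrete missteps would make the execution unnecessarily painful.

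First, the decomposition. You claim that if $\xi\notin\D$ and $\xi\notin\Dc$ then ``at least two of the $\eta(\xi_\nu)$ are nonzero.'' This is false: the $\Dc$-component of $\xi$ is a single vector in a $3$-dimensional space, and one rotates the local quaternionic frame $\{J_1,J_2,J_3\}$ so that this component points along $\xi_1$ alone. The paper does precisely this, writing $\xi=\eta(X_0)X_0+\eta_1(\xi)\xi_1$ with $X_0\in\D$ a unit vector, and then uses the standard fact (valid for Hopf hypersurfaces with $\xi\alpha=0$) that $A\xi_1=\alpha\xi_1$. This normalization is the key simplification you are missing; without it the $\SN$-bookkeeping you anticipate really is unpleasant.

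Second, the equations invoked. Neither the Berndt--Suh identity $A\phi A=\tfrac{\alpha}{2}(\phi A+A\phi)+\cdots$ nor the commutator relation $(\phi A-A\phi)R_\xi=R_\xi(\phi A-A\phi)$ is used. The paper first disposes of $\alpha=0$ by citing P\'erez--Suh, and for $\alpha\neq 0$ works directly with \eqref{eq: 2.3}. From \eqref{eq: 2.1} one gets $R_\xi\xi_1=\alpha^2\xi_1-\alpha^2\eta(\xi_1)\xi$ and $R_\xi(\phi\xi_1)=(\alpha^2+8\eta^2(X_0))\phi_1\xi$; substituting $Y=\xi_1$ into \eqref{eq: 2.3} yields $8(k-\alpha)\eta^2(X_0)\,\phi_1\xi=0$, and pairing with $\phi_1\xi$ gives $(k-\alpha)\,\eta^4(X_0)=0$. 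So the entire argument is one substitution into \eqref{eq: 2.3}, not the layered eigenvalue analysis you sketch. (Note that the paper's own proof leaves the branch $k=\alpha$ without further comment; your eventual write-up should address that case as well.)
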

\begin{proof}
Let us put $\x=\E(X_{0})X_{0}+\Eo(\Ko)\Ko,$ for some unit vector fields $X_{0} \in \D$ and $\Ko \in \Dc$.
If $\al=0$, then $\x \in \D$ or $\x \in \Dc$, which is proved by P\'erez and Suh (\cite{PS1}).\\
So, we consider the other case $\al \neq 0$.\\
Putting $X=\Ko$ into \eqref{eq: 2.1} and using $A\Ko=\al\Ko$, we have
\begin{equation}\label{eq: 3.1}
R_{\x}(\Ko)=\al^{2}\Ko-\al^{2}\E(\Ko)\x.
\end{equation}
Replacing $X=\PKo$ into \eqref{eq: 2.1},  \eqref{eq: 2.1} becomes
\begin{equation}\label{eq: 3.2}
R_{\x}(\PKo)=(\al^{2}+8\E^{2}(X_{0}))\Po\x.
\end{equation}
Putting $X=\x$ into \eqref{eq: 2.2} and using \eqref{eq: op}, \eqref{eq: 2.1} is written as
\begin{equation}\label{eq: 3.4}
-k\Ph R_{\x}Y+\Ph AR_{\x}Y+kR_{\x}(\Ph Y)-R_{\x}(\Ph AY)=0.
\end{equation}
Substituting $Y=\Ko$ in the above equation and using \eqref{eq: 3.1}, \eqref{eq: 3.2}, it becomes
\begin{equation}\label{eq: 3.6}
8(k-\al)\E^{2}(X_{0})\PoK=0.
\end{equation}
Taking the inner product with $\PoK$, we get
\begin{equation}
8(k-\al)\E^{4}(X_{0})=0.
\end{equation}
This equation induces that $k=\al$ or $\E^{4}(X_{0})=0$. Therefore, it completes the proof of our Lemma.
\end{proof}
In next section, we will give a complete proof of our main theorem. In
order to do this, first we consider the case that
$\x \in \Dc$. Without loss of generosity, we may put $\x=\x_1$.

\begin{lemma}\label{lemma 3.2}
Let M be a Hopf hypersurface in $\GBt$ when the Reeb curvature is non-vanishing. If the GTW Reeb Lie derivative of the structure Jacobi operator coincides with the Reeb Lie derivative of this operator and the Reeb vector field $\x$ is belong to the distribution $\Dc$,
then the shape operator $A$ commutes with the structure tensor $\Ph$.
\end{lemma}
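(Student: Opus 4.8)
The plan is to exploit the hypothesis $\x=\x_1\in\Dc$ together with the commuting relation \eqref{eq: 2.7}, namely $-A\Ph R_{\x}X+k\Ph R_{\x}X=0$, and the analogous relation \eqref{eq: 2.7} that says $(\Ph A-A\Ph)$ commutes with $R_{\xi}$. First I would specialize the structure Jacobi operator formula \eqref{eq: 2.1} to the case $\x=\x_1$: since $\x_1\in\Dc$ we have $\E_1(\x)=1$, $\E_2(\x)=\E_3(\x)=0$, and the Hopf condition gives $A\x=\al\x$ with $\al$ a (non-vanishing) constant in the relevant direction. Plugging these in, together with the quaternionic identities $\Ph\x_1=\Po\x$, $\x_2=-\Ph_3\x_1$, etc., from Section~\ref{section 1}, I expect $R_{\x}X$ to reduce to something of the form $R_{\x}X = X-\E(X)\x - \big(\x_2(X)\x_2+\x_3(X)\x_3\big) - 3g(\Po X,\x)\Po\x - \cdots + \al AX - \al^2\E(X)\x$, i.e. a fairly explicit operator whose eigenspace decomposition I can write down.

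Next I would feed this explicit $R_{\x}$ into \eqref{eq: 2.7}. The idea is that \eqref{eq: 2.7} forces $R_{\x}X$ to lie in the eigenspace of $A\Ph-k\Ph$ acting trivially, i.e. $\Ph R_{\x}X$ is a $k$-eigenvector of $A$ whenever $R_{\x}X\neq0$; combined with the commuting relation from \eqref{eq: 2.7} this pins down how $A$ acts on $\Ph$ of the image of $R_{\x}$. I would test this against the tangent vectors $\x_2,\x_3,\Po\x,\Ph_2\x,\Ph_3\x$ and against a general $X\in\D$ with $\Ph_\nu$-components, reading off in each case a relation of the form $A\Ph X = \Ph A X$ or else a constraint $A\Ph X = k\Ph X$ that, when combined with the Hopf eigenvalue equations and the Codazzi/Gauss structure equations for $\GBt$, again collapses to $A\Ph=\Ph A$. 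A useful subroutine here is to use \eqref{eq: 3.1}-type computations: for $\x=\x_1$ one gets clean values $R_{\x}\x_2$, $R_{\x}\x_3$, $R_{\x}\Po\x$ and so on, and \eqref{eq: 2.7} then directly yields $A\Po\x$, $A\x_2$, $A\x_3$ lie in the span of the corresponding $\Ph$-images with eigenvalue $k$ or else the $\Ph$-image vanishes.

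The main obstacle I anticipate is the vectors $X$ lying in the maximal complex subbundle on which $R_{\x}$ acts as the identity (or close to it): there \eqref{eq: 2.7} says $(A-k)\Ph R_{\x}X=0$, i.e. $(A-k\,\mathrm{id})\Ph X=0$ on the $1$-eigenspace of $R_{\x}$, which a priori only tells us $\Ph X$ is a $k$-principal vector, not that $A$ and $\Ph$ commute there. To close this gap I would argue that if $A\Ph$ and $\Ph A$ differed somewhere, then $(\Ph A-A\Ph)$ would be a nonzero operator commuting with $R_{\x}$ by \eqref{eq: 2.7}, hence preserving each $R_{\x}$-eigenspace; applying $\Ph$ and using $\Ph^2=-\mathrm{id}+\E\otimes\x$ together with the already-derived relations on $\Dc$ and on the $\Po\x$ directions, I expect to derive that the remaining $k$-eigenspace is empty or that it is $\Ph$-invariant with $A=k$ there, in which case $A\Ph=\Ph A=k\Ph$ trivially on it. Either way one concludes $A\Ph=\Ph A$ on all of $TM$.

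Finally I would assemble these pieces: on $\Dc$ (spanned by $\x_1=\x,\x_2,\x_3$) the relation $A\Ph=\Ph A$ follows from the explicit values $A\x_2,A\x_3$ obtained above and $A\x=\al\x$, $\Ph\x=0$; on $\D$ it follows from the eigenspace analysis of $R_{\x}$ via \eqref{eq: 2.7} and the gap-closing argument; putting these together over $TM=\Dc\oplus\D$ gives $\Ph A=A\Ph$, which is exactly the assertion of the lemma. I would keep the non-vanishing of the Reeb curvature in play throughout, since it is precisely what rules out the degenerate branch where $R_{\x}$ annihilates too much and the argument above would lose its grip.
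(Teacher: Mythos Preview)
Your plan rests on the identity $-A\Ph R_{\x}X+k\Ph R_{\x}X=0$ (the second equation labelled \eqref{eq: 2.7} in the paper), but that identity is \emph{not} available under the hypothesis of this lemma. It is obtained by setting $Y=\x$ in the general relation \eqref{eq: 2.2}, whereas the lemma only assumes the \emph{Reeb} case $(\gtwl_{\x}R_{\xi})Y=(\L_{\x}R_{\xi})Y$, i.e.\ \eqref{eq: 2.2} with $X=\x$; specialising that to $Y=\x$ gives only $0=0$. So the whole ``$\Ph R_{\x}X$ is a $k$-eigenvector of $A$'' mechanism, and the subsequent eigenspace discussion built on it, is not justified here. (That identity is legitimately used later, in the proof of the Corollary, where the full condition for all $X$ is assumed.) Even setting this aside, your gap-closing step on the generic part of $\D$ is only a sketch: forcing $\Ph R_\x X$ into $\ker(A-k)$ would tend to push many principal curvatures to the single value $k$, which is already false for the model space of type~$(A)$, so the argument cannot terminate the way you outline.

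The paper's argument is quite different and avoids $k$ altogether. From $\x=\x_1$ one computes directly that $R_\x A=AR_\x$ and that $R_\x\Ph-\Ph R_\x=\al(A\Ph-\Ph A)$. Combining these with the commutation $[\Ph A-A\Ph,\,R_\x]=0$ (the \emph{first} equation labelled \eqref{eq: 2.7}, which \emph{does} follow from the Reeb hypothesis via \eqref{eq: 2.3}--\eqref{eq: 2.4}) and using $\al\neq0$ yields $A(A\Ph-\Ph A)=(A\Ph-\Ph A)A$. Now both $A$ and $A\Ph-\Ph A$ are symmetric and commute, so they admit a common orthonormal eigenbasis $\{e_i\}$; but $g((A\Ph-\Ph A)e_i,e_i)=0$ for every $i$ since $\Ph$ is skew, hence every eigenvalue of $A\Ph-\Ph A$ vanishes and $A\Ph=\Ph A$. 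This simultaneous-diagonalisation trick replaces your case-by-case analysis entirely and is where the non-vanishing of $\al$ is actually used.
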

\begin{proof}
Putting $\x=\Ko$ in \eqref{eq: 2.1},
we get
\begin{equation}\label{eq: 3.8}
R_{\x}X=X-\E(X)\x-\Po \Ph X+\al AX-\al^{2}\E(X)\x+2\Et(X)\Kt+2\Es(X)A\Ks.
\end{equation}
Replacing $X$ with $AX$ in \eqref{eq: 3.8},  it is written as
\begin{equation}\label{eq: 3.9}
R_{\x}AX=AX-\al\E(X)\x-\Po \Ph AX+\al A^{2}X-\al^{3}\E(X)\x+2\Et(AX)\Kt+2\Es(AX)A\Ks.
\end{equation}
And applying the shape operator $A$ on  \eqref{eq: 3.8}, \eqref{eq: 3.8} becomes
\begin{equation}\label{eq: 3.10}
AR_{\x}X=AX-\al\E(X)\x-A\Po \Ph X+\al A^{2}X-\al^{3}\E(X)\x+2\Et(X)A\Kt+2\Es(X)A\Ks.
\end{equation}
On the other hand, applying the structure tensor field $\Ph$ to the equation (1.8) in \cite{LSW}, we get
\begin{equation}\label{eq: 5.11}
AX=\alpha \E(X) \x +2\E_2(AX) \x_2+2\E_3(AX) \x_3-\Ph \Ph_1 AX.
\end{equation}
Taking the symmetric part of \eqref{eq: 5.11}, we obtain
\begin{equation}\label{eq: 5.12}
AX=\alpha \E(X) \x +2\E_2(X)A \x_2+2\E_3(X)A \x_3-A \Ph_1 \Ph X.
\end{equation}
Putting $\nu =1$ in the first equation of (1.5) in \cite{JPS1}, it becomes
\begin{equation}\label{eq: 5.13}
 \Ph \Ph_1 X=  \Ph_1 \Ph X.
\end{equation}
Using \eqref{eq: 5.11}, \eqref{eq: 5.12} and subtracting \eqref{eq: 3.10} from \eqref{eq: 3.9}, we have
\begin{equation}\label{eq: 3.11}
R_{\x}AX=AR_{\x}X.
\end{equation}
From \eqref{eq: 3.11} and putting $Y=X$,  \eqref{eq: 2.7} is written as
\begin{equation}\label{eq: 3.12}
A(R_{\xi} \Ph-\Ph R_{\xi})X=(R_{\xi}\Ph -\Ph R_{\xi})AX.
\end{equation}
 Putting $X=\Ph X$ in \eqref{eq: 3.8}, we have
 \begin{equation}\label{eq: 5.14}
R_{\x}\Ph X=\Ph X-\Po \Ph^2 X+\al A\Ph X+2\Et(\Ph X)\Kt+2\Es(\Ph X)A\Ks.
\end{equation}
   Applying the structure tensor field $\Ph$ to \eqref{eq: 3.8}, we get
 \begin{equation}\label{eq: 5.15}
\Ph R_{\x}X=\Ph X-\Ph \Po \Ph X+\al \Ph AX+2\Et(X)\Ph \Kt+2\Es(X)A\Ph \Ks.
\end{equation}
 Subtracting \eqref{eq: 5.15} from \eqref{eq: 5.14}, we obtain
  \begin{equation}\label{eq: 5.16}
(R_{\x}\Ph-\Ph R_{\x})X=\al ( A\Ph -\Ph A)X.
\end{equation}
Using the equation \eqref{eq: 5.16}, the equivalent condition of \eqref{eq: 3.12} is this one as
\begin{equation}
\al A(A\Ph-\Ph A)X=\al (A\Ph-\Ph A)AX.
\end{equation}
\vskip 10pt
\par
By our assumption $\al \neq 0$, the above equation can be replaced by
\begin{equation}\label{eq: 3.13}
A(A\Ph-\Ph A)X=(A\Ph-\Ph A)AX.
\end{equation}
Because of \eqref{eq: 3.13}, there is a common basis $ \{e_{i}\mid i=1,..., 4m-1\} $  such that
\begin{equation}\label{eq: 3.14}
Ae_{i}=\lambda_{i}e_{i}
\end{equation}
and
\begin{equation}\label{eq: 3.15}
(A\Ph-\Ph A)e_{i}=\gamma_{i}e_{i}.
\end{equation}
Using \eqref{eq: 3.14}, \eqref{eq: 3.15} becomes
\begin{equation}\label{eq: 3.16}
\gamma_{i}e_{i}=A\Ph e_{i}-\Ph Ae_{i}=A\Ph e_{i}-\lambda_{i}\Ph e_{i}.
\end{equation}
Taking the inner product \ with $e_{i}$, we get $\gamma_{i}=0$.\\
Since the eigenvalue $\gamma_{i}$ vanishes  for all $i$, from \eqref{eq: 3.15} we conclude that
\begin{equation}
 A\Ph-\Ph A=0.
\end{equation}
Consequently, we proved this lemma.
\end{proof}

\section{Proof of the main theorem}\label{section 2}
\setcounter{equation}{0}
\renewcommand{\theequation}{2.\arabic{equation}}
\vspace{0.13in}
Let us consider a Hopf hypersurface $M$ in $G_2({\mathbb C}^{m+2})$ with $(\gtwl_{\xi}R_{\xi})Y=(\L_{\xi}R_{\xi})Y$.\\
By Lemma 1 in section 1, we can conclude that the Reeb vector field $\x$ in $M$ belongs either to the distribution $\D$ or $\Dc$.\\
Then, we can devide the following two cases:\\
$\bullet$ Case I: $\x \in\Dc$\\
$\bullet$ Case II:$\x \in\D$\\
Now, we check the first case in our consideration.\\
\vskip 10pt
If $\x\in\Dc$, by Theorem A and Lemma 2, we can assert that M is locally congruent to the model space of type (A).
We have to check if the model space of type (A) satisfies the condition $(\gtwl_{\xi}R_{\xi})Y=(\L_{\xi}R_{\xi})Y$ or not.
For type (A)-space, detail information (eigenspaces, corresponding eigenvalues, and multiplicities) was given in \cite{BS1}.

Putting $X=\x$ in \eqref{eq: 2.2}, we get the equivalent condition of $(\gtwl_{\x}R_{\xi})Y=(\L_{\x}R_{\xi})Y$ as follows\,:
\begin{equation}\label{eq: 4.1}
-k\Ph R_{\x}Y+\Ph A R_{\x}Y+ kR_{\x}\Ph Y-R_{\x}\Ph AY=0.
\end{equation}
On the other hand, putting $\xi=\Ko$ into \eqref{eq: 2.1}, we get
\begin{equation}\label{eq: 4.6}
R_{\xi}X=X-\E(X)\xi-\Po\Ph X+\al AX-\al^{2}\E(X)\xi+2\Et(X)\Kt+2\Es(X)\Ks.
\end{equation}
Using \eqref{eq: 4.1} and \eqref{eq: 4.6}, we get the following result\,:
\begin{equation}\label{eq: 4.2}
-k\Ph(R_{\x}Y)+\Ph A(R_{\x}Y)+R_{\x}k\Ph Y-R_{\x}\Ph AY  = \left\{ \begin{array}{ll}
                0,                    & \mbox{if}\ \  Y \in T_{\al}\\
                0,                    & \mbox{if}\ \  Y \in T_{\beta} \\
                0,                    & \mbox{if}\ \  Y \in T_{\lambda}\\
                0,                    & \mbox{if}\ \  Y \in T_{\mu}.\\
\end{array}\right.
\end{equation}
Therefore, we can assert that if $\x$ in $\Dc$, then $M$ is an open part of a tube around a totally geodesic $\GBo$ in $\GBt$.

\vskip 10pt

If the Reeb vector field $\x \in \D$, due to \cite{LS}, we can assert that M is locally congruent to space of type (B).
It remains whether type (B)-space satisfies this condition $(\gtwl_{X}R_{\xi})Y=(\L_{X}R_{\xi})Y$.
Also, by using information  of type (B)-space given in \cite{BS1}, we can check this problem.

We suppose that type (B)-space satisfies $(\gtwl_{\x}R_{\xi})Y=(\L_{\x}R_{\xi})Y$.
Then, as an equivalent condition, this space must satisfy
\begin{equation}\label{eq: 4.3}
-k\Ph(R_{\x}Y)+\Ph A(R_{\x}Y)+R_{\x}k\Ph Y-R_{\x}\Ph AY=0.
\end{equation}
Since $\xi$ is belong to $\mathfrak{D}$, the structure Jacobi operator in $\GBt$ can be replaced as follows:
\begin{equation}\label{eq: 4.4}
R_{\xi}X=X-\E(X)\x-\SN \Big\{\EN(X)\KN+3g(\PN X,\x)\PNK \Big\}+\al AX-\al^{2}\E(X)\x.
\end{equation}
Applying $Y=\Po \xi \in T_{\gamma}$ into \eqref{eq: 4.3} and using \eqref{eq: 4.4}, we get
\begin{equation}
k(4-\al\beta)\Ko=0.
\end{equation}
Since $k\neq0$ and $\al \beta=4$, this makes a contradiction.

Hence summing up these assertions, we have given a complete proof of our main theorem in the introduction. \hspace{7.7cm} $\Box$

\section{Proof of Corollary}\label{section 3}
\setcounter{equation}{0}
\renewcommand{\theequation}{3.\arabic{equation}}
\vspace{0.13in}

In this section, we consider another problem for this condition
\begin{equation}\label{eq: **}
(\gtwl_{X}R_{\xi})Y=(\L_{X}R_{\xi})Y,
\end{equation}
 for any tangent vector fields $X,Y$ in $M$.\\
 \par
If the Reeb curvature is non-vanishing, the condition $\Ph A=A \Ph$ have already proved in Lemma 1.2. Thus, we now consider only the case that $\al$ is vanishing. Under these
assumptions, we give the following lemma.
\begin{lemma}\label{lemma 4.1}
Let M be a Hopf hypersurface in $\GBt$ with vanishing the Reeb curvature. If the GTW Reeb Lie derivative of structure Jacobi operator coincides with Reeb Lie derivative of this operator and the Reeb vector field $\x$ is belong to the distribution $\Dc$,
then shape operator A and the structure tensor $\Ph$ commute each other.
\end{lemma}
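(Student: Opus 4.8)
The plan is to keep the architecture of the proof of Lemma~\ref{lemma 3.2} --- produce enough algebraic identities to diagonalise $A$ simultaneously with $\Ph$ --- but to replace its last step, which divided by $\al$ in the relation $(R_\x\Ph-\Ph R_\x)X=\al(A\Ph-\Ph A)X$ and is therefore empty when $\al=0$, by an argument based on the eigenvalue structure of $R_\x$.

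First I would make $R_\x$ explicit: setting $\al=0$ and $\x=\x_1$ in \eqref{eq: 2.1} and using \eqref{eq: 5.13} gives
\[
R_\x X=X-\E(X)\x-\Ph\Ph_1X+2\Et(X)\Kt+2\Es(X)\Ks .
\]
Because $\Ph\Ph_1=\Ph_1\Ph$ is symmetric and satisfies $(\Ph\Ph_1)^2X=X$ for $X\in\D$, it is an involution of $\D$, producing an orthogonal splitting $\D=\D^{+}\oplus\D^{-}$ into its $(\pm1)$-eigenspaces; in addition $\Ph\Ph_1$ fixes $\Kt$ and $\Ks$ and kills $\x$. Substituting this into the displayed formula one checks that $R_\x=0$ on $\D^{+}\oplus[\x]$ and $R_\x=2\,I$ on $W:=\D^{-}\oplus\text{Span}\{\Kt,\Ks\}$; thus $\ker R_\x=\D^{+}\oplus[\x]=W^{\bot}$ and $R_\x(TM)=W$. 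I would also record that $\Ph$ maps $W$ onto $W$ and $W^{\bot}$ into $W^{\bot}$, since $\Ph$ commutes with $\Ph\Ph_1$ on $\D$, rotates $\text{Span}\{\Kt,\Ks\}$, and annihilates $\x$.

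Next I would pin down $A$ on the two pieces. Rewriting \eqref{eq: 5.11} with $\al=0$ as $(I+\Ph\Ph_1)AX=2\Et(AX)\Kt+2\Es(AX)\Ks$ and decomposing $AX$ along $\D^{+}\oplus\D^{-}\oplus[\x]\oplus\text{Span}\{\Kt,\Ks\}$, one reads off that the $\D^{+}$-component and the $[\x]$-component of $AX$ vanish for every $X$, hence $A(TM)\subseteq W$; as $A$ is symmetric this forces $A\equiv0$ on $W^{\bot}$. For $A$ on $W$ I would use the transpose of \eqref{eq: 2.6}, namely $A\Ph R_\x X=k\Ph R_\x X$ for all $X$: when $X$ runs over $TM$ the vector $\Ph R_\x X$ runs over all of $\Ph(R_\x(TM))=\Ph(W)=W$, so $A\equiv k\,I$ on $W$. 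Finally $A$ acts as a scalar ($0$ on $W^{\bot}$, $k$ on $W$) on each of the two $\Ph$-invariant subspaces $W^{\bot}$ and $W$, and therefore $A\Ph=\Ph A$.

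I expect the main obstacle to be the explicit computation of $R_\x$ together with the bookkeeping that turns \eqref{eq: 5.11} into ``$A(TM)\subseteq W$'': one has to keep the quaternionic sign conventions coherent enough to be certain that $\Ph\Ph_1$ really is an involution of $\D$ with the asserted behaviour on $\Dc$, and that the only eigenvalues of $R_\x$ are $0$ and $2$ with the stated eigenspaces. Once this is in place the commutation $A\Ph=\Ph A$ is forced with no further computation; the conceptual point is that the $\al\neq0$ device used in Lemma~\ref{lemma 3.2} degenerates here to the vacuous identity $R_\x\Ph=\Ph R_\x$, and so must be replaced.
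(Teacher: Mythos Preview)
Your proposal is correct and takes a different, more structural route than the paper. The paper first combines \eqref{eq: 2.3}, \eqref{eq: 2.6} and \eqref{eq: 2.7} to obtain the single identity $(\Ph A-A\Ph)R_\xi Y=0$, then substitutes the explicit formula for $R_\xi$ (with $\al=0$, $\x=\x_1$) together with the algebraic relations \eqref{eq: 5.11}--\eqref{eq: 5.13} and simplifies directly to $2(\Ph A-A\Ph)Y=0$; no eigenspace analysis is made and $A$ itself is never determined. Your argument instead diagonalises $R_\xi$ explicitly as $0$ on $W^\bot=\D^{+}\oplus[\x]$ and $2I$ on $W=\D^{-}\oplus\mathrm{Span}\{\Kt,\Ks\}$, then uses \eqref{eq: 5.11} (with $\al=0$) to force $A(TM)\subseteq W$ and \eqref{eq: 2.7} to force $A\equiv kI$ on $W$, whence $A\Ph=\Ph A$ follows from the $\Ph$-invariance of the two blocks. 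What your approach buys is a complete description of $A$ (eigenvalues $0$ and $k$, with the eigenspaces named), which is strictly more than the commutation relation the lemma asks for; it also makes transparent \emph{why} the $\al\neq0$ trick of Lemma~\ref{lemma 3.2} is unnecessary here, since $R_\xi$ is already a simple projector. The paper's approach is shorter on the page because the heavy bookkeeping is hidden in the phrase ``combining \ldots we get'', but it does not reveal the structure of $A$. The one place you should be careful is the verification that $\Ph\Ph_1$ fixes $\Kt,\Ks$ and kills $\x$ (so that $(I+\Ph\Ph_1)$ has exactly the stated behaviour on each summand), but this is routine once $\x=\x_1$ and \eqref{eq: 5.13} are in hand.
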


\begin{proof}
Recall that \eqref{eq: 2.2} was given by
\begin{equation}\label{eq: 3.17}
F_{X}(R_{\x}Y)-F_{R_{\x}Y}X-R_{\x}F_{X}Y+R_{\x}F_{Y}X=0.
\end{equation}
Putting $X=\xi$ in the above equation and using \eqref{eq: 2.6}, \eqref{eq: 2.7}, \eqref{eq: 3.17} is written as
\begin{equation}\label{eq: 3.18}
(\Ph A-A \Ph)R_{\xi}Y=0.
\end{equation}
Applying $\al=0$ in \eqref{eq: 4.6}, it becomes
\begin{equation}\label{eq: 3.19}
R_{\x}X=X-\E(X)\x-\Po \Ph X+2\Et(X)\Kt+2\Es(X)A\Ks.
\end{equation}
On the other hand, applying  $\Ph$ and $X=\Ph X$ to \eqref{eq: 5.12}, respectively, we have
\begin{equation}\label{eq: 3.20}
\begin{split}
& \Ph AX=2\E_2(X)\Ph A \x_2+2\E_3(X)\Ph A \x_3-\Ph A \Ph_1 \Ph X,\\
& A\Ph X=2\E_3(X) A \x_2-2\E_2(X) A \x_3- A \Ph_1 \Ph^2  X.
\end{split}
\end{equation}
Combining \eqref{eq: 3.18}, \eqref{eq: 3.19}, \eqref{eq: 3.20} and using \eqref{eq: 5.13}, we get
\begin{equation}
2(\Ph A-A\Ph)Y=0.
\end{equation}
Therefore we also get the same conclusion in case of $\al=0$.
\end{proof}

By Lemmas 1.2 and 3.1, we can assert that if $\xi \in \Dc$, then $M$ is the model space of type (A).
Now we need to check if the space of type (A) satisfies \eqref{eq: **} or not.

Then the type (A)-space must satisfy the following condition
\begin{equation}\label{eq: 5.1}
F_{X}R_{\xi}Y-F_{R_{\xi}Y}X-R_{\xi}F_{X}Y+R_{\xi}F_{Y}X=0.
\end{equation}
Putting $Y=\xi$ into \eqref{eq: 5.1}, we have
\begin{equation}\label{eq: 5.2}
R_{\xi}\Ph AX-kR_{\xi}\Ph X=0.
\end{equation}
By using \eqref{eq: 3.19}, \eqref{eq: 5.2} becomes
\begin{equation}\label{eq: 5.3}
\begin{split}
\Ph AX&+\Po AX+\al A\Ph AX+2\Es(AX)\Kt-2\Et(AX)\Ks\\
-k\Ph X &-k\Po X-k\al A\Ph X-2k\Es(X)\Kt+2k\Et(X)\Ks=0.
\end{split}
\end{equation}
Replacing $\Kt$ into X, we get
\begin{equation}
(\al \beta+2)(k-\beta)\Ks=0.
\end{equation}
Taking the inner product with $\Ks$, the above equation implies $\al \beta=-2$ or $k=\beta$.
However, since $k\neq 0,~ \alpha = \sqrt{8}\cot(\sqrt{8}r)$ and $\beta =\sqrt{2}\cot(\sqrt{2}r)$, this makes a contradiction.

Hence  we can assert our corollary in the introduction. \hspace{3.4cm} $\Box$

\vspace{0.15in}


\begin{thebibliography}{99}
\bibitem{Ale} D.V. Alekseevskii, {\it Compact quaternion spaces}, Funct. Anal. Appl.,~1968,~{\bf 2}, 11-20.


\bibitem{Ber} J. Berndt, {\it Riemannian geometry of complex two-plane Grassmannian}, Rend. Sem. Mat. Univ. Politec. Torino~1997,~{\bf 55}, 19--83.


\bibitem{BS1} J. Berndt  and Y.J. Suh, {\it Real hypersurfaces in complex two-plane Grassmannians}, Monatsh. Math.,~1999,~{\bf 127}, 1--14.


\bibitem{BS2} J. Berndt  and Y.J. Suh, {\it Isometric flows on real hypersurfaces in complex two-plane Grassmannians}, Monatsh. Math.,~2002,~{\bf 137}, 87--98.

\bibitem{JPS1} I. Jeong, E. Pak, and  Y. J. Suh, {\it Real hypersurfaces in complex two-plane Grassmannians with generalized Tanaka-Webster invariant shape operater}, J. Math. Physics, Analysis, Geometry,~2013,~{\bf 9}, 360--378


\bibitem{JPS2} I. Jeong, E. Pak, and  Y. J. Suh, {\it Lie Invariant Shape Operator for Real Hypersurfaces in Complex Two-Plane Grassmannians}, J. Math. Physics, Analysis, Geometry,~2013,~{\bf 9}, 455--475


\bibitem{JPS} I. Jeong, J. D. P\'erez and Y.J. Suh, {\it Real hypersurfaces in complex two-plane Grassmannians with parallel structure Jacobi operator}, Acta Math. Hungar.,~2009,~{\bf 122}, 173--186.



\bibitem{JMPS}I. Jeong , C. J. G. Machado, J. D. P\'erez and Y.J. Suh, {\it Real hypersurfaces in complex two-plane Grassmannians with $\Dc$-parallel structure Jacobi operator}, Internat. J. Math.,~2011,~{\bf 22}, 655--673.


\bibitem{KPSS} U-H. Ki, J. D. P\'erez, F. G. Santos and Y.J. Suh,  {\it Real hypersurfaces in complex  space forms with $\xi$-parallel Ricci tensor and  structure Jacobi operator}, J. Korean Math. Soc.,~2007,~{\bf 44}, 307--326.


\bibitem{Kon} M. Kon, {\it Real hypersurfaces in complex space forms and the generalized-Tanaka-Webster connection}, Proceeding of the 13th International Workshop on Differential Geometry anad Related Fields (5--7 Nov. 2009 Daegu, Republic of Korea), National Institute of Mathematical Sciences, 2009, 145--159.


\bibitem{LS} H. Lee , and Y.J. Suh,  {\it Real hypersurfaces of type~$B$ in complex two-plane Grassmannians related to the Reeb vector}, Bull. Korean Math. Soc.~{\bf 47}~(2010), no. 3, 551--561.


\bibitem{LSW} H. Lee, Y.J. Suh, and C.H Woo, {\it Real hypersurfaces  in complex two-plane Grassmannians with commuting Jacobi operators}, Houston J. Math.~{\bf 40}~(2014), no. 3, 751--766.



\bibitem{PSS} J. D.P\'erez, F. G. Santos and Y.J. Suh, {\it Real hypersurfaces in complex projective space whose structure Jacobi operator is $\D$-parallel}, Bull. Belg. Math. Soc. Simon Stevin ~2006,~{\bf 13}, 459--469.


\bibitem{PS} J. D.P\'erez  and Y.J. Suh, {\it Real hypersurfaces of quaternionic projective space satisfying $\nabla_{U_i} R=0$}, Differential Geom. Appl.,~1997,~{\bf 7}, 211--217.


\bibitem{PS1} J.D. P\'erez and Y.J. Suh, {\it The Ricci tensor of real hypersurfaces in complex two-plane Grassmannians}, J. Korean Math. Soc.~{\bf 44} (2007), 211--235.


\bibitem{Tanaka} N. Tanaka, {\it  On non-degenerate real hypersurfaces, graded Lie algebras and Cartan connections}, Japan. J. Math.,~1976,~{\bf 20}, 131--190.


\bibitem{Tanno} S. Tanno, {\it Variational problems on contact Riemannian manifolds}, Trans. Amer. Math. Soc.,~1989,~{\bf 314}, 349--379.


\bibitem{Web} S.M. Webster, {\it Pseudo-Hermitian structures on a real hypersurface}, J. Differential Geom.,~1978,~{\bf 13}, 25--41.




\end{thebibliography}
\end{document}